\newtheorem{thm}{Theorem}
\newtheorem{cor}{Corollary}
\newtheorem{conj}{Conjecture}
\newtheorem{lemma}{Lemma}
\newtheorem{claim}{Claim}
\def\lognp{\tfrac{\log n}{-\log p}}
\def\E{\mathbb E}
\def\P{\mathbb P}
\title{Counting cliques in a random graph}
\author{Taro Sakurai}
\address{Chiba University\\Graduate School of Science\\Department of Mathematics and Informatics\\
1-33 Yayoi Inage\\Chiba\\263-8522 (JAPAN)\\
ORCID: 0000-0003-0608-1852}
\email{tsakurai@math.s.chiba-u.ac.jp}
\author{Norihide Tokushige}
\address{University of the Ryukyus\\College of Education\\
1 Senbaru Nishihara\\Okinawa\\903-0213 (JAPAN)\\
ORCID: 0000-0002-9487-7545}
\email{hide@edu.u-ryukyu.ac.jp}
\date{\today}
\subjclass[2020]{Primary 05C80, Secondary 05A16, 05C30}
\keywords{cliques, expected number, high probability upper bound, 
independent set, random graph}
\begin{document}
\begin{abstract}
We show that the expected number of cliques in the Erd\H os--R\'enyi random 
graph $G(n,p)$ is $n^{\frac1{-2\log p}(\log n-2\log\log n+O(1))}$.
\end{abstract}
\maketitle
\section{Introduction}
In this note we estimate the expected number of cliques in a random graph. 
We start with some definitions.
Let $n$ be a positive integer, and let $0<p<1$ be a real number.
Let $G(n,p)$ denote the Erd\H os--R\'enyi random graph, that is, 
it has $n$ vertices and each of the $\binom n2$ possible edges occurs 
independently with probability $p$.
A clique is a maximal complete subgraph of a graph.
Let $X_{n,p}$ be a random variable counting cliques in $G(n,p)$. 
Our main result is the following.
\begin{thm}\label{thm:main}
\[
\E[X_{n,p}]=
n^{\frac1{-2\log p}(\log n-2\log\log n+O(1))}.
\]
\end{thm}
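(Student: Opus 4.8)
The plan is to start from the exact identity obtained by linearity of expectation. For a fixed $k$-element vertex set $S$, the event that $S$ spans a complete subgraph has probability $p^{\binom k2}$; conditioned on that, the event that $S$ is a \emph{maximal} complete subgraph is the intersection, over the $n-k$ vertices $v\notin S$, of the events ``$v$ is not adjacent to every vertex of $S$'', which are independent (they involve disjoint edge sets) and each have probability $1-p^k$. Hence $\P[S\text{ is a clique}]=p^{\binom k2}(1-p^k)^{n-k}$ and
\[
  \E[X_{n,p}]=\sum_{k=0}^{n}\binom nk p^{\binom k2}(1-p^k)^{n-k}.
\]
Write $q:=-\log p>0$, a positive constant since $p$ is fixed; it suffices to show that the logarithm of this sum is $\tfrac{(\log n)^2}{2q}-\tfrac{\log n\log\log n}{q}+O(\log n)$, which is exactly $\log n$ times $\tfrac1{-2\log p}(\log n-2\log\log n+O(1))$.

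For the upper bound I would discard the factor $(1-p^k)^{n-k}\le 1$ and use $\binom nk\le n^k/k!$, so that the $k$-th term is at most $u_k:=\tfrac{n^k}{k!}p^{\binom k2}$; it is essential to keep the $1/k!$, as this is what produces the $-\tfrac{\log n\log\log n}{q}$ correction. The sequence $(u_k)$ is log-concave because $u_{k+1}/u_k=\tfrac{n}{k+1}p^k$ is a product of two positive decreasing factors, so $\max_k u_k$ is attained at the unique mode $k^\ast$, which satisfies $u_{k^\ast}\ge u_{k^\ast-1}$ and $u_{k^\ast}\ge u_{k^\ast+1}$; these inequalities force $qk^\ast+\log k^\ast=\log n+O(1)$, hence, bootstrapping, $k^\ast=\tfrac1q(\log n-\log\log n+O(1))$. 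Substituting this into $\log u_{k^\ast}=k^\ast\log n-\log(k^\ast!)-q\binom{k^\ast}{2}$ via Stirling's formula and simplifying — the constant-order localization of $k^\ast$ is all that is needed, since every error term is $O(k^\ast)=O(\log n)$ — gives $\log u_{k^\ast}=\tfrac{(\log n-\log\log n)^2}{2q}+O(\log n)=\tfrac{(\log n)^2}{2q}-\tfrac{\log n\log\log n}{q}+O(\log n)$. Then $\E[X_{n,p}]\le\sum_k u_k\le(n+1)\max_k u_k$, and the factor $n+1$ is absorbed into $e^{O(\log n)}$.

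For the lower bound it suffices to retain a single term. I would take $k_1:=\lfloor\log n/q\rfloor$, so that $1\le np^{k_1}<e^q$. Then $\binom n{k_1}\ge(n/k_1)^{k_1}$, $p^{\binom{k_1}2}\ge e^{-qk_1^2/2}$, and, since $p^{k_1}<e^q/n\le\tfrac12$ for large $n$, the maximality factor satisfies $(1-p^{k_1})^{n-k_1}\ge e^{-2np^{k_1}}\ge e^{-2e^q}$, a positive constant. Taking logarithms and using $k_1=\tfrac1q(\log n+O(1))$, $\log k_1=\log\log n+O(1)$ yields $\log\E[X_{n,p}]\ge\log\binom n{k_1}+\binom{k_1}2\log p+(n-k_1)\log(1-p^{k_1})\ge\tfrac{(\log n)^2}{2q}-\tfrac{\log n\log\log n}{q}-O(\log n)$, matching the upper bound. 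Combining the two estimates proves the theorem after dividing the resulting expression by $\log n$.

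The main obstacle, I expect, is pinning down the second-order term: cruder bounds such as $\binom nk\le n^k$ (dropping the $1/k!$) or discarding $(1-p^k)^{n-k}$ while using a weak binomial bound recover only the leading term $\tfrac{(\log n)^2}{2q}$ and miss $-\tfrac{\log n\log\log n}{q}$, so one must retain the factorial and control the maximizing index to additive accuracy $O(1)$ — which fortunately is exactly what the log-concavity inequalities deliver. A secondary subtlety is the choice of $k$ in the lower bound: one wants $k$ near $\log n/q$ rather than the true maximizer $\tfrac1q(\log n-\log\log n)$ of $u_k$, precisely so that the maximality factor $(1-p^k)^{n-k}$ does not decay, while $\binom nk p^{\binom k2}$ is still within $e^{O(\log n)}$ of its maximum.
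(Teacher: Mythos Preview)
Your proof is correct and follows the same overall strategy as the paper: write $\E[X_{n,p}]$ as the sum $\sum_k F_n(k)$, bound it above by $(n+1)\max_k F_n(k)$ and below by a single term with $k=\lfloor\log n/q\rfloor$, and show both ends of the sandwich equal $\exp\bigl(\tfrac{(\log n)^2}{2q}-\tfrac{\log n\log\log n}{q}+O(\log n)\bigr)$. The technical execution differs, however. For the upper bound the paper uses the entropy bound $\binom nx\le n^n/(x^x(n-x)^{n-x})$, keeps the resulting $(n-x)\log(1-x/n)$ term, and is then forced into a case split at $x=\log n/q$; in the small-$x$ regime it locates the maximizer of $h_n(x)=x\log n-x\log x+\tfrac{x(x-1)}2\log p$ via calculus and the Lambert $W$ function. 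You instead use $\binom nk\le n^k/k!$, which eliminates the awkward $(n-x)\log(1-x/n)$ term at the outset, and exploit the log-concavity of $u_k=n^kp^{\binom k2}/k!$ to pin the mode by the ratio test $u_{k+1}/u_k=\tfrac n{k+1}p^k$, then bootstrap $qk^\ast+\log k^\ast=\log n+O(1)$ to $k^\ast=\tfrac1q(\log n-\log\log n+O(1))$. This avoids both the case analysis and the Lambert $W$ function, at no real cost. For the lower bound the two arguments coincide: both take $k$ near $\log n/q$ so that $np^k=O(1)$ and the maximality factor $(1-p^k)^{n-k}$ is bounded below by a constant, exactly the point you flag at the end (your caution there is slightly overstated: taking $k$ at the $u_k$-maximizer would cost only a factor $e^{-\Theta(\log n)}$, still absorbed into $e^{O(\log n)}$, but your choice is cleaner).
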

Using Markov's inequality Theorem~\ref{thm:main} has the 
following immediate consequence.
\begin{cor}
\[
 \lim_{n\to\infty}\P[X_{n,p}<n^{\frac{\log n}{-2\log p}}]=1.
\] 
\end{cor}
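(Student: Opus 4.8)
The plan is to apply Markov's inequality directly to the nonnegative random variable $X_{n,p}$ with the threshold $t=n^{\frac{\log n}{-2\log p}}$, thereby reducing the claimed limit to a one-line asymptotic comparison between $\E[X_{n,p}]$ and $t$. Since $X_{n,p}$ is nonnegative, Markov's inequality gives $\P[X_{n,p}\ge t]\le \E[X_{n,p}]/t$ for every $t>0$, and passing to complements shows that it suffices to prove $\E[X_{n,p}]/t\to 0$ as $n\to\infty$; the target probability is then $1-\P[X_{n,p}\ge t]\to 1$.

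Next I would insert the estimate of Theorem~\ref{thm:main} and simplify the exponent of $n$. Writing $\E[X_{n,p}]=n^{\frac1{-2\log p}(\log n-2\log\log n+O(1))}$ and $t=n^{\frac{\log n}{-2\log p}}$, the ratio $\E[X_{n,p}]/t$ equals $n$ raised to the power
\[
\frac{\log n-2\log\log n+O(1)}{-2\log p}-\frac{\log n}{-2\log p}
=\frac{-2\log\log n+O(1)}{-2\log p}.
\]
The decisive observation is that $0<p<1$ forces $-2\log p$ to be a fixed positive constant, so the $\log n$ terms cancel exactly and only the negative $-2\log\log n$ contribution survives.

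It then remains to check that this power of $n$ tends to $0$. Denoting the exponent by $e_n=\frac{-2\log\log n+O(1)}{-2\log p}$, I would write $n^{e_n}=\exp(e_n\log n)$ and note that $e_n\log n$ is asymptotic to $\frac{-2(\log n)(\log\log n)}{-2\log p}\to-\infty$, since the bounded $O(1)$ term is negligible against $\log\log n\to\infty$. Hence $\E[X_{n,p}]/t=\exp(e_n\log n)\to 0$, which yields the corollary. There is no genuine obstacle here beyond bookkeeping; the only point requiring care is the sign of $\log p$ (negative on $(0,1)$), which I must track consistently so that the cancellation of the $\log n$ terms and the divergence of $e_n\log n$ to $-\infty$ come out in the right direction.
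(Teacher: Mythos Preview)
Your proof is correct and follows exactly the route the paper indicates: the paper simply states that the corollary is an immediate consequence of Theorem~\ref{thm:main} via Markov's inequality, and your argument makes this explicit by computing the exponent of the ratio $\E[X_{n,p}]/t$ and checking it tends to $-\infty$.
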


A $k$-clique is a clique with $k$ vertices.
The expected number of $k$-cliques in $G(n,p)$ is
\[
 F_n(k):= \binom nk p^{\binom k2}(1-p^k)^{n-k},
\]
and so $\E[X_{n,p}]=\sum_{k=1}^n F_n(k)$.
What is the most popular size $k$ which maximizes $F_n(k)$?
If this is given by $k=\tilde k$, then 
$F_n(\tilde k)\leq\E[X_{n,p}]\leq nF_n(\tilde k)$.
We show that $\tilde k$ is around $\lognp$, and 
$F_n(\tilde k)=n^{\frac1{-2\log p}(\log n-2\log\log n+O(1))}$.

Cliques (or independent sets) are one of the main objects in graph theory.
Moon and Moser \cite{MoonMoser} obtained the maximum number of cliques
in a graph. Bollob\'as and Erd\H os \cite{BollobasErdos} studied the 
maximum size of cliques, and the number of different sizes of
cliques in $G(n,p)$.
Our result is motivated by a recent result due to the first author \cite{Sakurai} 
concerning the number of maximal complete bipartite subgraphs of a 
random bipartite graph, which comes from a problem of counting 
formal concepts of random formal contexts in the theory of formal concept 
analysis originated by Wille \cite{Wille}. 
See also \cite{Bradley,Kovacs} for some application related to the number 
of cliques in $G(n,p)$.

The problem counting cliques in $G(n,p)$ naturally extends to the
hypergraph setting. Let $G^{(r)}(n,p)$ denote the random $r$-uniform 
hypergraph with $n$ vertices where each of the $\binom nr$ possible 
hyperedge occurs independently with probability $p$.
In this case a clique is a maximal complete subhypergraph. 
Let $X_{n,p}^{(r)}$ be a random variable counting cliques in $G^{(r)}(n,p)$. 
Then we have
\begin{align}\label{E[X^r]}
\E[X_{n,p}^{(r)}]=\sum_{k=1}^n 
\binom nk p^{\binom kr}(1-p^{\binom k{r-1}})^{n-k}.
\end{align}

\begin{conj}
 \[
\E[X_{n,p}^{(r)}]=\exp\left(
\left(\frac{\log n}{-\log p}\right)^{\frac1{r-1}}
\left(
(1-\tfrac1{r!})\log n-\tfrac1{r-1}\log\log n +O(1)
\right)
\right).
 \]
\end{conj}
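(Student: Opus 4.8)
The plan is to carry the approach behind Theorem~\ref{thm:main} over to the sum \eqref{E[X^r]}. Write $q:=-\log p>0$, let $F(k):=\binom nkp^{\binom kr}(1-p^{\binom k{r-1}})^{n-k}$ denote the summand, so $\E[X_{n,p}^{(r)}]=\sum_{k=1}^nF(k)$, and set $\kappa:=(\lognp)^{1/(r-1)}$. Taking logarithms,
\[
\log F(k)=\log\binom nk-q\binom kr+(n-k)\log\!\big(1-p^{\binom k{r-1}}\big),
\]
a balance between a growth term $\log\binom nk\sim k\log n$, a density penalty $-q\binom kr$, and a maximality factor. The first step is to reduce the problem to the largest summand $F(\tilde k)$. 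The second difference $\log F(k+1)-2\log F(k)+\log F(k-1)\approx-q\binom k{r-2}$ tends to $-\infty$ for $r\ge3$, so $F$ is sharply unimodal; a Laplace-type estimate together with routine tail bounds then gives $\log\E[X_{n,p}^{(r)}]=\log F(\tilde k)+O(1)$. I emphasise that for $r\ge3$ the crude bound $F(\tilde k)\le\E[X_{n,p}^{(r)}]\le nF(\tilde k)$ used after Theorem~\ref{thm:main} is too weak, since its slack $\log n$ already exceeds the target error $O(\kappa)$.

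To locate $\tilde k$, note that the maximality factor acts as a sharp threshold: since $np^{\binom k{r-1}}=\exp\!\big((1-q\binom k{r-1}/\log n)\log n\big)$, it is $1+o(1)$ once $q\binom k{r-1}$ exceeds $\log n$ by a growing amount and tends to $0$ below, so the peak sits essentially at the threshold $\binom{\tilde k}{r-1}\approx\log n/q$. Equating the forward difference $\log\frac{n-k}{k+1}-q\binom k{r-1}$ (plus a negligible maximality increment) to zero confirms that the maximiser is pinned by $q\binom{\tilde k}{r-1}=\log n-\log\tilde k+O(\log\log n)$. Solving for $\tilde k$ and substituting into $\log F(\tilde k)$ — using $\binom kr=\tfrac{k-r+1}r\binom k{r-1}$ to turn the penalty into $q\binom{\tilde k}r\approx\tfrac{\tilde k-r+1}r\log n$, and Stirling in the form $\log\binom n{\tilde k}=\tilde k\log n-\tilde k\log\tilde k+\tilde k+O(\log\tilde k)$ — then yields the asymptotic expansion. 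Exactly as when $r=2$, the $\log\log n$ contribution is produced by $-\tilde k\log\tilde k$ together with $\log\tilde k=\tfrac1{r-1}\log\log n+O(1)$.

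The step I expect to be the real obstacle is the bookkeeping of the intermediate-order terms, and I doubt it is routine. Because $\tilde k$ is pinned through $\binom{\tilde k}{r-1}$ rather than through $\tilde k^{r-1}/(r-1)!$, and because $\tilde k$ is only of size $\kappa$, the approximation $\binom k{r-1}\approx k^{r-1}/(r-1)!$ is not harmless at the required precision: it shifts $\tilde k$ off $((r-1)!\,\lognp)^{1/(r-1)}$ by a bounded amount and injects terms of order $\log n$ into $\log F(\tilde k)$, which for $r\ge3$ dominate the $\kappa\log\log n$ term that the statement records. Indeed, a direct evaluation at the threshold gives leading coefficient $((r-1)!)^{1/(r-1)}(1-\tfrac1r)$ and $\log\log n$-coefficient $-((r-1)!)^{1/(r-1)}/(r-1)$, which reduce to the stated $1-\tfrac1{r!}$ and $-\tfrac1{r-1}$ only at $r=2$. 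Reconciling this computation with the conjectured constants — either by exhibiting a cancellation that restores them with the claimed $O(1)$ error inside the exponent, or by pinning down the correct replacement — is where the difficulty concentrates, and it is the part I would attack first.
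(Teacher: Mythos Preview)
There is nothing in the paper to compare against: the statement is labeled a \emph{Conjecture}, and the authors leave it open, proving only the case $r=2$ (Theorem~\ref{thm:main}) and asserting without details that the right-hand side is a lower bound for general $r$. Your proposal is likewise not a proof but a plan, and its final paragraph in fact argues that the conjectured constants are wrong for $r\ge3$.

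That diagnosis looks correct. Pinning the maximiser by $q\binom{\tilde k}{r-1}\approx\log n$ gives $\tilde k\sim((r-1)!)^{1/(r-1)}\kappa$, and then $\log F(\tilde k)=\tilde k(1-\tfrac1r)\log n-\tilde k\log\tilde k+O(\tilde k)$ produces a prefactor $((r-1)!)^{1/(r-1)}$ on $\kappa$ together with leading coefficient $1-\tfrac1r$; these agree with the conjecture's $1$ and $1-\tfrac1{r!}$ only at $r=2$. So the honest outcome of your plan is a correction of the constants rather than a proof of the statement as written, and there is no missing idea on your side---the paper simply offers nothing further to compare with. Your side remark that the crude sandwich $F(\tilde k)\le\E[X_{n,p}^{(r)}]\le nF(\tilde k)$ loses $\log n$ in the exponent and hence swamps the target $O(\kappa)$ error for $r\ge3$ is also correct, and a Laplace-type argument exploiting the concavity you identify is the appropriate replacement.
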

By a routine computation of the term for 
$k=\lfloor\frac{\log n}{-\log p}\rfloor$ in
\eqref{E[X^r]} one can verify that the right hand side of the conjecture is indeed
a lower bound for $\E[X_{n,p}^{(r)}]$.
Theorem~\ref{thm:main} shows that the conjecture is true for $r=2$.

\section{Proof}
Let us define
\begin{align}\label{def:fn(x)}
f_n(x):=\log F_n(x)=\log\tbinom nx+\tbinom x2\log p+(n-x)\log(1-p^x),
\end{align}
and we will show that 
\[
 \max_{1\leq k\leq n}f_n(k)=\tfrac{\log n}{-2\log p}(\log n-2\log\log n+O(1)).
\]
\subsection{Lower bound}
\begin{lemma}\label{lemma:lower}
Let $x=\lognp+O(1)$. Then 
$f_n(x)=\frac{\log n}{-2\log p}(\log n-2\log\log n+O(1))$.
\end{lemma}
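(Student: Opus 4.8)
The plan is to substitute $x=\lognp+O(1)$ into the three summands of $f_n(x)$ appearing in \eqref{def:fn(x)} and track the error terms. Since the target has size $\Theta((\log n)^2)$ while an additive error of size $O(\log n)=\frac{\log n}{-2\log p}\cdot O(1)$ gets absorbed into the claimed $O(1)$, it suffices to compute each summand up to an $O(\log n)$ error.

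First I would dispose of the term $(n-x)\log(1-p^x)$. The choice of $x$ gives $p^x=e^{x\log p}=e^{-\log n+O(1)}=\Theta(n^{-1})$, hence $\log(1-p^x)=-p^x+O(p^{2x})=\Theta(n^{-1})$, and therefore $(n-x)\log(1-p^x)=O(1)$. This is exactly the role of the choice of $x$: it is the threshold at which the ``maximality correction'' becomes a bounded quantity, so it contributes nothing to the main term.

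Next I would handle the two dominant terms. By Stirling's formula (the possible non-integrality of $x$ is immaterial, as one works with $\Gamma$), $\log\tbinom nx=x\log n-x\log x+O(\log n)$ since $x=O(\log n)$; plugging in $x=\lognp+O(1)$ and $\log x=\log\log n+O(1)$ yields
\[
\log\tbinom nx=\frac{(\log n)^2}{-\log p}-\frac{\log n\,\log\log n}{-\log p}+O(\log n).
\]
For the second term, $\tbinom x2\log p=\tfrac12(x^2-x)\log p=\frac{(\log n)^2}{2\log p}+O(\log n)$, using $x^2=\frac{(\log n)^2}{(\log p)^2}+O(\log n)$.

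Finally I would add the three contributions. The point to watch is that the two $(\log n)^2$-terms only partially cancel: since $\frac1{-\log p}+\frac1{2\log p}=\frac1{-2\log p}$, their coefficients sum to $\frac1{-2\log p}$, which is the source of the factor $2$ in the denominator of the statement. Collecting everything,
\[
f_n(x)=\frac{(\log n)^2}{-2\log p}-\frac{\log n\,\log\log n}{-\log p}+O(\log n)=\frac{\log n}{-2\log p}\bigl(\log n-2\log\log n+O(1)\bigr),
\]
which is the assertion. I do not expect a genuine obstacle here; it is a bookkeeping exercise, and the only care needed is to confirm that the $O(1)$ ambiguity in $x$ feeds only into the $O(\log n)$ error terms (via $x\log n$ and $x\log x$) and never disturbs the two leading terms.
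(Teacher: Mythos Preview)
Your proposal is correct and follows essentially the same approach as the paper's own proof: Stirling for $\log\tbinom nx$, direct expansion of $\tbinom x2\log p$, and the observation that $p^x=\Theta(n^{-1})$ forces $(n-x)\log(1-p^x)=O(1)$, all combined up to $O(\log n)$ error. The only cosmetic differences are the order in which the three summands are treated and that the paper records the slightly sharper $(n-x)\log(1-p^x)=-1+O(\tfrac{\log n}n)$, which is not needed at this precision.
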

\begin{proof}
Let $c=\frac1{-\log p}$. Using Stirling's formula we have
\[
\binom nx =(1+o(1)) \sqrt{\frac n{2\pi x(n-x)}}\,
\frac{n^n}{x^x(n-x)^{n-x}}. 
\]
By the Taylor expansion we get
$\log(1-\tfrac{x}{n})=-\tfrac{c\log n}{n}+O(\tfrac1n)$.
Thus it follows that
\begin{align*}
\log\tbinom nx &= 
\tfrac12(\log n -\log 2\pi -\log x-\log(n-x))\\
&\qquad+n\log n - x\log x -(n-x)\log (n-x) + o(1)\\
&=x\log n - x\log x +O(\log n)\\
&=c(\log n)^2-c\log n\log\log n +O(\log n).
\end{align*}
We also have $(n-x)\log(1-p^{x})=-1+O(\tfrac{\log n}n)$. Finally we can rewrite
\eqref{def:fn(x)} to obtain
\begin{align*}
f_n(x) 
&=c(\log n)^2-c\log n\log\log n +\tfrac12(c\log n)^2\log p+O(\log n)\\
&=\tfrac{\log n}{-2\log p}\left(\log n-2\log\log n+O(1)\right),
\end{align*}
as needed.
\end{proof}

\subsection{Upper bound}
To give an upper bound for $f_n(x)$ from \eqref{def:fn(x)} we use
$\binom nx\le\frac{n^n}{x^x(n-x)^{n-x}}$, or equivalently,
$\log\tbinom nx\leq x\log n-x\log x-(n-x)\log(1-\tfrac xn)$
for $0<x<n$.
Then we have $f_n(x)\leq g_n(x)$, where
\[
 g_n(x):=x\log n-x\log x-(n-x)\log(1-\tfrac xn)+\tfrac{x(x-1)}2\log p.
\]

\begin{lemma}\label{lemma:upper}
For $1\leq x\leq n-1$ we have
$g_n(x)\leq\frac{\log n}{-2\log p}(\log n-2\log\log n+O(1))$.
\end{lemma}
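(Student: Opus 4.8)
The plan is to replace $g_n(x)$ by a smooth function whose maximum over $x>0$ is easy to locate, and then estimate that maximum. First I would dispose of the term $-(n-x)\log(1-\tfrac xn)$: applying $\log u\le u-1$ with $u=(1-\tfrac xn)^{-1}$ gives $-\log(1-\tfrac xn)\le\tfrac{x/n}{1-x/n}$, hence $-(n-x)\log(1-\tfrac xn)\le x$ for every $0<x<n$. Consequently $g_n(x)\le\phi(x)$ on $(0,n)$, where
\[
\phi(x):=x\log n-x\log x+x+\tfrac{x(x-1)}2\log p,
\]
so it suffices to prove $\max_{x>0}\phi(x)\le\tfrac{\log n}{-2\log p}(\log n-2\log\log n+O(1))$.

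Next I would exploit that $\phi$ is strictly concave on $(0,\infty)$, since $\phi''(x)=-\tfrac1x+\log p<0$. Hence $\phi$ attains its maximum at the unique stationary point $x^{*}$, determined by $\phi'(x^{*})=0$, that is,
\[
\log n-\log x^{*}+(x^{*}-\tfrac12)\log p=0,\qquad\text{equivalently}\qquad x^{*}=c\bigl(\log n-\log x^{*}\bigr)+\tfrac c2,
\]
where $c=\tfrac1{-\log p}$. Because $\phi'(1)=\log n+\tfrac12\log p>0$ for $n$ large and $\phi'(x)\to-\infty$ as $x\to\infty$, we have $x^{*}>1$; feeding the trivial bound $x^{*}\le c\log n+\tfrac c2$ back into the defining relation twice (a short bootstrap) yields $x^{*}=c\log n-c\log\log n+O(1)$.

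Finally I would evaluate $\phi$ at $x^{*}$. Writing $\phi(x)=x(\log n-\log x+1)-\tfrac{x(x-1)}{2c}$ and substituting $\log n-\log x^{*}=(x^{*}-\tfrac12)/c$, the first-order terms collapse and one computes $\phi(x^{*})=\tfrac{(x^{*})^{2}}{2c}+x^{*}$. Inserting $x^{*}=c\log n-c\log\log n+O(1)$ gives $(x^{*})^{2}=c^{2}(\log n)^{2}-2c^{2}\log n\log\log n+O(\log n)$ and $x^{*}=O(\log n)$, whence
\[
\phi(x^{*})=\tfrac{c}{2}(\log n)^{2}-c\log n\log\log n+O(\log n)=\tfrac{\log n}{-2\log p}\bigl(\log n-2\log\log n+O(1)\bigr),
\]
which is the asserted bound (recall $\tfrac c2=\tfrac1{-2\log p}$).

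The only genuinely delicate point is the bootstrap estimate for $x^{*}$: one must check that solving the transcendental equation $\log(n/x^{*})=(x^{*}-\tfrac12)/c$ pins $x^{*}$ down to additive error $O(1)$, because an error of order $\log\log n$ at this stage would, after squaring, corrupt precisely the $-2\log\log n$ term that the theorem is trying to isolate. Everything else is routine: the concavity of $\phi$ reduces the problem to a single point, and the estimate $-(n-x)\log(1-\tfrac xn)\le x$ makes the passage from $g_n$ to $\phi$ lossless at the level of precision we need.
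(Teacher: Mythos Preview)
Your argument is correct (modulo a harmless slip: from $\log n-\log x^{*}+(x^{*}-\tfrac12)\log p=0$ one gets $x^{*}=c(\log n-\log x^{*})+\tfrac12$, not $+\tfrac{c}{2}$; this is an $O(1)$ term and does not affect anything downstream). The bootstrap does pin $x^{*}$ to additive $O(1)$, and the evaluation $\phi(x^{*})=\tfrac{(x^{*})^{2}}{2c}+x^{*}$ is clean.

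Your route differs from the paper's in two respects. First, the paper splits into the ranges $x\ge\lognp$ and $x<\lognp$. For large $x$ it decomposes $g_n=a_n+b_n$ with $a_n(x)=x\log n+\tfrac{x(x-1)}{2}\log p$ and $b_n(x)=-x\log x-(n-x)\log(1-\tfrac xn)$, and bounds each piece separately (the first by its vertex, the second by monotonicity). For small $x$ it uses $|(n-x)\log(1-\tfrac xn)|=O(\log n)$, which only holds in that range, to pass to $h_n(x)=x\log n-x\log x+\tfrac{x(x-1)}{2}\log p$ and then solves $h_n'(x)=0$ via the Lambert $W$ function. Your observation that $-(n-x)\log(1-\tfrac xn)\le x$ holds on all of $(0,n)$ eliminates the case split entirely: the extra $+x$ is absorbed into $\phi$, and a single concave maximization covers the whole interval. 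Second, you replace the appeal to Lambert $W$ by an explicit two-step bootstrap, which is more elementary and makes the $O(1)$ control transparent. The paper's splitting buys nothing at this level of precision; your uniform bound is the tidier choice.
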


We distinguish the cases $x\geq\frac{\log n}{-\log p}$ and
$x<\frac{\log n}{-\log p}$.

\begin{claim}
Lemma~\ref{lemma:upper} is true for $x\geq\lognp$.
\end{claim}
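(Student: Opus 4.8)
The plan is to show that for large $n$ the function $g_n$ is decreasing on the half-line $x\ge\lognp$, so that on the range $\lognp\le x\le n-1$ its maximum is attained at the left endpoint $x=\lognp$, and then to evaluate $g_n(\lognp)$. First I would set $c=\frac1{-\log p}$ as in Lemma~\ref{lemma:lower}, so that $\log p=-1/c$ and $\lognp=c\log n$, and differentiate $g_n$ term by term to get $g_n'(x)=\log\tfrac{n-x}{x}-\tfrac{x-1/2}{c}$. Since $\log\frac{n-x}{x}=\log(\frac nx-1)$ is strictly decreasing on $(0,n)$ and $\frac{x-1/2}{c}$ is strictly increasing, $g_n'$ is strictly decreasing on $(0,n)$; hence, to conclude $g_n'<0$ throughout $[\lognp,n-1]$, it is enough to check the sign at $x=\lognp$. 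There $x=c\log n$, so using $\frac{n-x}{x}\le\frac nx$,
\[
 g_n'(\lognp)\le\log\tfrac{n}{c\log n}-\Bigl(\log n-\tfrac1{2c}\Bigr)=-\log\log n-\log c+\tfrac1{2c}<0
\]
once $n$ is large. Therefore $g_n(x)\le g_n(\lognp)$ for every $x$ in the claimed range.

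It then remains to estimate $g_n(\lognp)$, which is a routine computation paralleling Lemma~\ref{lemma:lower}. With $x=c\log n$ one has $x\log n=c(\log n)^2$, $-x\log x=-c\log n\log\log n+O(\log n)$, and $\tfrac{x(x-1)}{2}\log p=-\tfrac c2(\log n)^2+O(\log n)$; moreover, by the Taylor expansion $\log(1-\tfrac xn)=-\tfrac xn+O(\tfrac{x^2}{n^2})$ used already for the lower bound, $-(n-x)\log(1-\tfrac xn)=c\log n+o(1)=O(\log n)$. Adding these four contributions gives $g_n(\lognp)=\tfrac c2(\log n)^2-c\log n\log\log n+O(\log n)=\frac{\log n}{-2\log p}(\log n-2\log\log n+O(1))$, which is exactly the bound asserted in the claim.

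I do not expect a genuine obstacle here. The one point that needs a little care is controlling the sign of $g_n'$ on the \emph{entire} interval $[\lognp,n-1]$ at once, but that follows immediately from the monotonicity of $g_n'$ and requires no further case split; the rest (that $\lognp\le n-1$ for large $n$, and the estimate of the $\log(1-x/n)$ term) is bookkeeping of the same type as in Lemma~\ref{lemma:lower}. The genuinely delicate regime is the complementary one, $x<\lognp$, where $g_n$ can still be increasing so that its maximum is interior; that range is treated separately.
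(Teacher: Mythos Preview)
Your argument is correct. It differs from the paper's only in organization: the paper splits $g_n=a_n+b_n$ with $a_n(x)=x\log n+\tfrac{x(x-1)}2\log p$ and $b_n(x)=-x\log x-(n-x)\log(1-\tfrac xn)$, observes that $a_n$ is concave with its (global) maximum at $\lognp+\tfrac12$ while $b_n$ is decreasing on all of $(0,n)$, and then bounds each piece by its value at or near the left endpoint of the range. You instead keep $g_n$ intact, compute $g_n'(x)=\log\tfrac{n-x}{x}+(x-\tfrac12)\log p$ directly, and check it is negative from $\lognp$ onward. The two routes are of the same difficulty; yours is a touch more streamlined (one derivative rather than two, and a single evaluation point), while the paper's split isolates the two mechanisms --- the quadratic peak of $a_n$ versus the monotonicity of $b_n$ --- and yields a bound on $a_n$ valid for all $x$, not just $x\ge\lognp$. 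The final evaluation at $x=\lognp$ is the same routine computation in both.
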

\begin{proof}
Let $a_n(x):=x\log n+\frac{x(x-1)}2\log p$.
Since $a_n''(x)=\log p<0$ it follows that $a_n'(x)$ is decreasing
in $x$, and if $a_n'(x)=0$ then $x=\lognp+\frac12$. Thus
\[
 a_n(x)\leq a_n(\lognp+\tfrac12)=\tfrac{(2\log n-\log p)^2}{-8\log p}
=\tfrac{(\log n)^2}{-2\log p}+O(\log n).
\]

Next let $b_n(x):=-x\log x-(n-x)\log(1-\frac xn)$.
Since $b_n'(x)=-\log x+\log(1-\frac xn)<0$ it follows that 
$b_n(x)$ is decreasing in $x$, and 
\begin{align*}
 b_n(x)&\leq b_n(\lognp)\\
&=\lognp\left(-\log\log n+\log(-\log p)+\log(1+\tfrac{\log n}{n\log p})\right)
-n\log(1+\tfrac{\log n}{n\log p})\\
&=\lognp(-\log\log n+O(1)).
\end{align*}
Then the result follows from $g_n(x)=a_n(x)+b_n(x)$.
\end{proof}

\begin{claim}
Lemma~\ref{lemma:upper} is true for $x<\lognp$.
\end{claim}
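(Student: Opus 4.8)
The plan is to control $g_n$ on $1\le x<\lognp$ by replacing it with a concave function and then bounding that function by one of its tangent lines at a point near its maximizer. Set $c=\frac1{-\log p}$, as in the proof of Lemma~\ref{lemma:lower}, so that $\lognp=c\log n$. First I would remove the term $-(n-x)\log(1-\tfrac xn)$: since $0\le x/n<1$, the elementary estimates $-\tfrac{x/n}{1-x/n}\le\log(1-\tfrac xn)\le0$ give $0\le-(n-x)\log(1-\tfrac xn)\le x$, hence $g_n(x)\le h(x)+c\log n$ for $1\le x<c\log n$, where
\[
 h(x):=x\log n-x\log x+\tfrac{x(x-1)}2\log p .
\]
So it suffices to show $h(x)\le\frac{\log n}{-2\log p}(\log n-2\log\log n+O(1))$ on that range.

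Since $h''(x)=-\tfrac1x+\log p<0$ for all $x>0$, the function $h$ is concave, so $h(x)\le h(x_0)+h'(x_0)(x-x_0)$ for every $x_0>0$; this is what lets me avoid solving the transcendental equation $h'(x)=0$ exactly. I would take $x_0:=c\log n-c\log\log n$, which is where $h$ is maximized up to an additive constant. Using $\log x_0=\log\log n+O(1)$ and $(\log\log n)^2=o(\log n)$, a direct expansion should yield both $h'(x_0)=O(1)$ and
\[
 h(x_0)=\tfrac c2(\log n)^2-c\log n\log\log n+O(\log n)
       =\tfrac{\log n}{-2\log p}\bigl(\log n-2\log\log n+O(1)\bigr).
\]
Since $|x-x_0|=O(\log n)$ for $1\le x<c\log n$, we get $h'(x_0)(x-x_0)=O(\log n)$, so $g_n(x)\le h(x)+c\log n\le h(x_0)+O(\log n)$, which is exactly the claimed bound; together with the first claim this finishes the proof of Lemma~\ref{lemma:upper}.

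The step I expect to be the real obstacle is keeping the $\log\log n$-sized contributions straight in the evaluation of $h(x_0)$ (and in verifying $h'(x_0)=O(1)$). One cannot imitate the previous claim and bound the three summands $x\log n$, $-x\log x$, and $\tfrac{x(x-1)}2\log p$ separately here: for $x<c\log n$ the ``entropy'' term $-x\log x$ no longer delivers a uniform $-c\log n\log\log n$, and it is precisely the competition between $-x\log x$ and the negative quadratic term $\tfrac{x(x-1)}2\log p$ that forces the maximizer near $c\log n-c\log\log n$ and generates the $-2\log\log n$ in the exponent. Concavity is what makes it legitimate to handle the three terms together; the rest is routine but delicate bookkeeping, the main points being that the tangent correction $h'(x_0)(x-x_0)$ and the discarded term $-(n-x)\log(1-\tfrac xn)$ are each only $O(\log n)$, hence harmlessly absorbed.
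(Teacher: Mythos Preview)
Your argument is correct and essentially parallel to the paper's: both reduce $g_n$ to the concave function $h$ by discarding the $O(\log n)$ term $-(n-x)\log(1-\tfrac xn)$, and then maximize $h$. The genuine difference is in how the maximum is located. The paper solves $h'(x)=0$ exactly via the Lambert $W$ function, obtaining the maximizer $\tilde x=\tfrac1{-\log p}(\log n-\log\log n+O(1))$ and then evaluating $h(\tilde x)$. You instead bypass the transcendental equation entirely: you pick $x_0=c(\log n-\log\log n)$ by inspection, verify $h'(x_0)=O(1)$, and use the concavity tangent bound $h(x)\le h(x_0)+h'(x_0)(x-x_0)$ together with $|x-x_0|=O(\log n)$. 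Your route is more elementary (no special functions or their asymptotics are needed) and makes transparent that only an approximate maximizer is required; the paper's route is slightly shorter once one is willing to quote the asymptotics of $W$. The computations you flag as ``delicate bookkeeping'' do go through exactly as you describe, with the $(\log\log n)^2$ contribution absorbed into $O(\log n)$.
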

\begin{proof}
Let
\[
 h_n(x):=x\log n-x\log x+\tfrac{x(x-1)}2\log p.
\]
Then we have $g_n(x)=h_n(x)+O(\log n)$ because 
$|(n-x)\log(1-\frac xn)|\leq |n\log(1-\frac xn)|=O(\log n)$ 
for $x< \lognp$. To estimate $h_n(x)$ we have
\[
 h_n'(x)=\log n-\log x-1+x\log p-\tfrac12\log p,
\]
and
\[
 h''_n(x)=-\tfrac 1x+\log p<0.
\]
So $h_n'(x)$ is decreasing in $x$.
By solving $h_n'(x)=0$ we get
\[
x=\tfrac{W(\frac{-n\log p}{e\sqrt p})}{-\log p}
=\tfrac1{-\log p}(\log n-\log\log n+O(1))=:\tilde x,
\]
where $W$ is the Lambert $W$ function \cite{NIST}.
Thus $h_n(x)$ is maximized at $x=\tilde x$ and
the corresponding maximum value is
\[
h_n(\tilde x)=\tfrac{\log n}{-2\log p}\left(\log n-2\log\log n+O(1)\right),
\]
which gives the desired upper bound for $g_n(x)$.
\end{proof}

\subsection{Proof of Theorem~\ref{thm:main}}
\begin{proof}
By Lemmas~\ref{lemma:lower} and \ref{lemma:upper} we have
\[
\tilde f_n:=\max_{1\leq k\leq n}f_n(k)=\tfrac{\log n}{-2\log p}(\log n-2\log\log n+O(1)),
\]
and 
\[
 \E[X_{n,p}]=\exp (\tilde f_n+O(\log n))=
n^{\tfrac 1{-2\log p}(\log n-2\log\log n+O(1))}.
\qedhere
\] 
\end{proof}

\section*{Acknowledgment}
The second author was supported by JSPS KAKENHI Grant No. 18K03399.

\end{document}